\newtheorem{theorem}{Theorem}[section]
\newtheorem{lemma}{Lemma}[section]
\newtheorem{definition}{Definition}[section]
\newtheorem{remark}{Remark}[section]
\newenvironment{proof}{\noindent{\bf Proof.\,}}{\hfill $\Box$\par\medskip}
\newcommand{\rr}{\mathbb R}
\newcommand{\zz}{\mathbb Z}
\newcommand{\Id}{\mathop{\mathrm{Id}}\nolimits}
\title[On the classification of inner maps]
{On the classification of the regular components of inner maps.}
\author{I. Yu. Vlasenko}
\date{}
\hfill \vphantom{a}}
\begin{document}
\selectlanguage{english}

\begin{abstract}
The topological classification of inner maps on the fully
invariant regular components of a wandering set with a special
attracting boundary up to the topological conjugacy is defined  
in terms of distinguishing graph.
Two inner maps of the same degree are topologically equivalent on
their fully invariant regular components of a certain class 
if and only if their distinguishing graphs are equivalent.
\end{abstract}
\maketitle

\section{Introduction.}
Inner maps were introduced by Stoilov in \cite{Stoilov1938}.
Recall that a map is called open if the image of an open set is an
open set. A map is called discrete if a preimage of any point is a
discrete set (it consists of isolated points).
A continuous open discrete map is called inner map.
Note that inner maps on compact surfaces have finite number of preimages.

The most noticeable representatives of inner maps are
homeomorphisms and holomorphic maps.
In fact, an inner map of an oriented surface can be represented as
a composition of a non-constant analytical function and a homeomorphism
due to Stoilov theorem~\cite{Stoilov1938}.
It follows the class of inner maps is much wider than holomorphic
maps, for example, it includes all homeomorphisms.
While dynamics of homeomorphisms, diffeomorphisms and holomorphic maps
has been studied deeply in recent times 
the study of general inner maps with methods of dynamical systems
theory just makes its first steps.

The class of inner maps introduced here in
definition~\ref{def:pseudolinear} seems exotic at first 
glance, but this class has such renown representatives as
complex polynomials on the Riemannian sphere considered on the basin
of attraction to infinity due to the B\"{o}ttcher theorem \cite{Milnor1999}.
Classification of dynamics on the attraction basin is done up to the
topological conjugacy.

\begin{theorem}[Main theorem]\label{th:main_article4}
Restrictions of inner maps $f$, $g$ on the 
fully invariant regular components of wandering set 
with a fully invariant pseudolinear attracting isolated connected component
satisfying definitions~\ref{def:pseudolinear} and~\ref{def:genericity_barticle4}
are topologically equivalent if and only if
they have the same degree and the
corresponding distinguishing graphs are equivalent.  
\end{theorem}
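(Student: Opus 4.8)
The plan is to establish the two implications separately, treating the degree together with the equivalence class of the distinguishing graph as a complete system of conjugacy invariants. For the necessity direction I would start from a homeomorphism $h$ realizing the topological equivalence, so that $h\circ f=g\circ h$ on the relevant fully invariant domain. Since $h$ is a homeomorphism and conjugation preserves the cardinality of the preimage of a generic point, the degrees of $f$ and $g$ must agree. Because openness, discreteness and the local degree are topological notions, $h$ carries the wandering set of $f$ onto that of $g$ and respects the partition into regular components, sending regular components to regular components. The relation $h\circ f=g\circ h$ then forces $h$ to intertwine the self-maps induced by $f$ and $g$ on the sets of components; passing to the combinatorial data this yields an isomorphism of the distinguishing graphs preserving the vertex labels (local degrees), the edge structure coming from adjacency through the attracting boundary, and the induced dynamics. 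That is exactly a graph equivalence.

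For the converse I would assume equal degree and a graph equivalence $\Phi$ between the distinguishing graphs of $f$ and $g$, and construct the conjugating homeomorphism in stages, anchored at the attracting component. By Definition~\ref{def:pseudolinear} each map is, near its fully invariant pseudolinear attracting isolated connected component, conjugate to a fixed normal form (the analogue of the B\"ottcher model $z\mapsto z^{d}$). This supplies a canonical homeomorphism $h_{0}$ on a neighbourhood of the attracting boundary that conjugates $f$ to $g$ there and is compatible with $\Phi$; this $h_{0}$ is the germ from which the global conjugacy will be grown.

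Next I would extend $h_{0}$ outward through the tower of wandering regular components. Since each map restricted to a regular component is a branched covering onto its image component of the degree recorded by the graph, and the graphs are matched by $\Phi$, I can lift $h_{0}$ one level at a time: once $h$ is defined on an image component, the covering lifting property, combined with the agreement of local degrees at the branch points, produces a unique continuous extension to each preimage component satisfying $h\circ f=g\circ h$. Iterating over the full preimage tree indexed by $\Phi$ defines $h$ on the entire fully invariant union, while the genericity condition of Definition~\ref{def:genericity_barticle4} keeps the branch points in general position so that the local models at critical points correspond and the lifts fit together. The inverse homeomorphism is built symmetrically from $\Phi^{-1}$.

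The hard part will be showing that these locally defined lifts assemble into one globally well-defined homeomorphism that remains continuous up to and across the attracting boundary. Two points must be controlled. First, at a branch point the local degree exceeds one and a lift is determined only up to a deck transformation, so I expect to use the vertex labelling of the distinguishing graph to single out the correct sheet and guarantee that the independent choices along the orbit are consistent. Second, continuity as orbits accumulate on the attracting component is delicate, and here the pseudolinear normal form of Definition~\ref{def:pseudolinear} must be invoked to control the geometry of the components shrinking toward the boundary and to confirm that $h$ extends continuously to the attracting set. Overcoming this gluing-and-accumulation obstacle is what turns the stagewise construction into the required topological equivalence.
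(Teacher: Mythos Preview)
Your proposal rests on a misreading of what the paper calls the ``distinguishing graph.'' It is not a combinatorial graph whose vertices are regular components with adjacency edges and degree labels. By the construction in Section~\ref{sec:distinguishing_graph}, the distinguishing graph is the semi-interval $[0,1)$ (a fundamental domain for the timeline coordinate) decorated with finitely many labelled points, one for each critical point, the label being a triple $(d,n,C\overline{C})$; an equivalence is a homeomorphism of $[0,1)$ matching labelled points. So the invariant is recording the \emph{relative heights} of the critical neutral fibers inside a fundamental annulus together with some discrete data, not a tree of components.

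This undermines your sufficiency argument. You want to fix a B\"ottcher-type conjugacy $h_0$ on an attracting collar and lift it outward through branched covers. But the branch values are wandering and sit at specific fractional heights $y'$ in the timeline coordinate; if the first critical level for $f$ lies at $y'_f$ and that for $g$ at $y'_g\neq y'_f$, a conjugacy propagated rigidly from the collar will carry a regular fiber of $f$ onto the critical fiber of $g$, and your lift simply fails at that level. The homeomorphism of $[0,1)$ supplied by the graph equivalence is exactly the datum needed to reparametrize the timeline so that the critical levels line up; your outline never uses it, because you have interpreted the equivalence as a purely combinatorial matching of components. The same misreading makes your necessity argument miss the target: you would need to show that a conjugacy $h$ induces a homeomorphism of $[0,1)$ preserving the labels $(d,n,C\overline{C})$, which requires arguing that $h$ respects the (intrinsic) neutral foliation, hence maps fundamental annuli to fundamental annuli and critical fibers to critical fibers with the same combinatorial tags.

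For contrast, the paper's own proof does not lift through covers at all. It equips every component of every preimage of the fundamental neighborhood with local $(x,y,C)$-coordinates coming from the neutral and timeline foliations, and then observes that the equivalence of distinguishing graphs is precisely what is needed to match these coordinate charts so that the map written as the identity in coordinates is a conjugacy, just as in the critical-point-free case of Lemma~\ref{lm:pl_regcomp_no_critical_points_are_conjugate}.
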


\section{Preliminary information.}
Let $f\colon M\to M$ be an inner map of a compact surface M.
Denote by $O^+(x)=\{f^n(x)|n\ge 0\}$ a forward trajectory of $x$,
$O^-(x)=\{f^{-n}(x)|n\ge 0\}$ a backward trajectory of $x$,
and $O(x)=\{f^{-n}\circ f^m(x)|n,m\ge 0\}$ a full trajectory of $x$.
Let us call the set $O^\perp(x)=\{f^{-n}\circ f^n(x)|n\ge 0\}$ a neutral section of trajectory.

\begin{definition}
  A point $x$ is called wandering if there exists an open set $U(x)$
  such that $\forall n\in\zz, n\not=0$ $f^n\left(U(x)\right)\cap U(x)=\varnothing$.
\end{definition}

A point which is not wandering is called nonwandering. The set of
nonwandering points of $f$ is denoted by $\Omega(f)$.
Denote by $B_\varepsilon(X)$ the $\varepsilon$-neighborhood of X.

\begin{definition}
  A point $x$ is said to be $\omega$-regular wandering if it is wandering and $\forall \varepsilon>0$
  $\exists \delta>0$, $\exists N > 0$: $\forall n>N$
  $f^n\left(B_\delta(x)\right)\subset B_\varepsilon(\Omega)$.
\end{definition}

\begin{definition}
  A point $x$ is called $\alpha$-regular wandering if it is wandering and $\forall \varepsilon>0$
  $\exists \delta>0$, $\exists N > 0$: $\forall n>N$
  $f^{-n}\left(B_\delta(x)\right)\subset B_\varepsilon(\Omega)$.
\end{definition}

A point which is both $\alpha$-regular and $\omega$-regular is called regular.
A connected component of the set of regular wandering points is called
a regular component of wandering set.
A component $U$ is called periodic if there exists n such that $f^n(U)=U$.
When $n=1$ a periodic component $U$ is called invariant.
Also, when $f^{-1}(U)=U$, an invariant component is called fully invariant.
Fully invariant components are important because study of periodic 
components can be essentially reduced to the study of fully invariant
components.

\begin{definition}\label{def:attracting}
  An isolated connected component $\partial_0$ of the boundary
  $\partial U$ of a fully invariant regular component $U$ of wandering set 
  is attracting if
  it has a strictly invariant neighborhood $V(\partial_0)$
  (a neighborhood such that $\overline{f(V)}\subset V$).
\end{definition}

\begin{definition}\label{def:radially_pseudolinear}
  An attracting isolated connected component $\partial_0$ of the
  boundary $\partial U$ of a fully invariant regular component $U$ of
  wandering set is radially pseudolinear 
  if it has strictly invariant neighborhood $V(\partial_0)$ 
  such that there exists a regular compact invariant foliation of $U\cap V$. 
  Equivalently, it can be defined as a foliation by a 
  function $\Phi\colon V\to [0,1]$ such that $\Phi$ is regular in $U\cap V$ 
  and $\Phi(f(x))=\lambda\Phi(x)$, $\lambda>0$.
\end{definition}

\begin{definition}\label{def:pseudolinear}
  A radially pseudolinear attracting isolated connected component
  $\partial_0$ of the boundary $\partial U$ of a fully invariant
  regular component $U$ of wandering set is pseudolinear 
  if $\forall x \in U\cap V$ $O^\perp(x)$ is dense in the foliation
  fiber of $x$.
\end{definition}

\begin{definition}[Genericity condition.]\label{def:genericity_barticle4}
  The genericity condition is a
  condition such that every fiber of the $\Phi$ foliation has no more
  than one image of critical point.
\end{definition}

\begin{definition}
  A closed neighborhood $Q\subset U$ is called fundamental
  neighborhood of $U$ if 
  \begin{enumerate}
  \item it is closure of its interior;
  \item $\forall x\in U$ $O(x)\cap Q\not=\varnothing$ \\
    ($Q$ contains a representative of every full trajectory in $U$);
  \item if $x\in Q, x\not\in \partial Q$ then $f(x)\not\in Q$.
  \end{enumerate}
\end{definition}

\begin{definition}
  Fundamental neighborhood $Q$ is called saturated
  if for every $x\in Q$ $Q$ contains $O^\perp(x)$.
\end{definition}

\begin{lemma}\label{lm:fund_nei_is_ring}
Fundamental neighborhoods in the attracting neighborhood from the
definition~\ref{def:radially_pseudolinear} are homeomorphic to the ring.
\end{lemma}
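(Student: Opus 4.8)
The plan is to exploit the product structure that the foliation of definition~\ref{def:radially_pseudolinear} imposes on $U\cap V$, and then to read off the topology of $Q$ from the two fundamental-domain conditions.

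First I would establish the ambient ring structure. The foliation of $U\cap V$ is regular with all leaves compact, and each leaf is a connected closed $1$-manifold, i.e. a circle; the function $\Phi$ is regular on $U\cap V$, so its level sets $\Phi^{-1}(c)$ are exactly the leaves and $\Phi$ has no interior critical points. A regular foliation of the connected surface $U\cap V$ by circles, together with the submersion $\Phi$ onto an interval, yields a trivialization $U\cap V\cong S^1\times(0,t_0]$ in which the leaves are the slices $S^1\times\{t\}$ and the attracting component $\partial_0$ sits at the degenerate end $t\to 0$ (so $\partial_0\not\subset U$). Since $Q$ is closed in the compact surface $M$ it is compact, and since $Q\subset U$ while $\Phi=0$ only on $\partial_0\not\subset U$, the continuous function $\Phi$ attains a positive minimum $m=\min_Q\Phi>0$ and a maximum $M=\max_Q\Phi$. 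Hence $Q$ already sits inside the genuine closed ring $S^1\times[m,t_0]$, and the whole problem reduces to identifying the homeomorphism type of a fundamental domain inside a closed annulus.

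Next I would pin down the radial extent of $Q$ using condition~(2) and the relation $\Phi(f(x))=\lambda\Phi(x)$, where $0<\lambda<1$ (the attracting hypothesis forces $\lambda<1$). Along any full trajectory the values of $\Phi$ form the geometric set $\{\lambda^n\Phi(x):n\in\zz\}$, so the function $\tau=\log\Phi \bmod |\log\lambda|$ is constant on full trajectories and descends to a continuous surjection of $U\cap V$ onto the circle $\rr/|\log\lambda|\zz$. Because every full trajectory meets $Q$ and $\tau$ is constant on such trajectories, the restriction $\tau|_Q$ is still surjective, which forces $\log M-\log m\ge|\log\lambda|$, i.e. $M\ge m/\lambda$: the region $Q$ must span at least one full fundamental radial period. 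In particular $Q$ meets two consecutive leaves $L_c=\Phi^{-1}(c)$ and $L_{\lambda c}=\Phi^{-1}(\lambda c)$ that are related by $f$.

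The decisive step, and the one I expect to be the main obstacle, is to show that $Q$ wraps essentially around the ring, so that $\partial Q$ consists of exactly two essential circles and $Q$ is the annular region between them. Here I would combine condition~(3) with the degree of $f$. On each leaf $f$ restricts to a degree-$d$ covering $L_c\to L_{\lambda c}$, and condition~(3) says that $f(\mathrm{int}\,Q\cap L_c)$ misses $Q\cap L_{\lambda c}$. If $Q\cap L_c$ failed to be the whole circle, then its complementary arcs, stretched by the degree-$d$ map, would be forced to fold back across $Q\cap L_{\lambda c}$ and violate condition~(3); this is precisely the mechanism that is impossible for a disk-like placement of $Q$ and is what rules out boundary components of $\partial Q$ that bound disks in the ambient ring. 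To upgrade the combinatorics into the clean statement that $Q$ contains full leaves I would invoke the pseudolinearity of definition~\ref{def:pseudolinear}: for $x\in Q\cap L_c$ the neutral section $O^\perp(x)$ is dense in $L_c$, and, together with condition~(2) and the closedness of $Q$, this density propagates to the conclusion that whole leaves in the fundamental range lie in $Q$. Once $Q$ is known to be a connected compact subsurface of $S^1\times[m,t_0]$ whose two boundary curves are essential and which is planar (being a subsurface of an annulus it carries no handles), the classification of compact planar surfaces leaves only one possibility, namely $Q\cong S^1\times[0,1]$, a ring. The genuine work is the folding/covering argument of this paragraph: turning conditions~(2) and~(3) into the precise statements that $\partial Q$ has exactly two components and that both are essential, with the density from definition~\ref{def:pseudolinear} bridging the gap between ``$Q$ meets a leaf in arcs'' and ``$Q$ contains the leaf''.
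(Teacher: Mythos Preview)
Your overall strategy diverges from the paper's, and the divergence is worth noting. The paper never tries to identify the ambient topology of $U\cap V$ directly; instead it observes that, since the foliation in $U\cap V$ is regular, $f$ restricted to a fundamental neighborhood is a branched-point-free covering onto its image, and then runs an iteration/compactness argument: if $Q$ carried a handle or a M\"obius band, so would each of the infinitely many disjoint iterates $f^n(Q)$, all sitting inside the compact surface $M$, which is impossible. That argument uses the compactness of $M$ and the dynamics rather than the function $\Phi$. Your route, by contrast, extracts the product structure of $U\cap V$ first via an Ehresmann-type argument from the regular submersion $\Phi$; that first paragraph is essentially sound (modulo the connectedness of $U\cap V$, which the paper does argue from isolatedness of $\partial_0$) and is arguably more direct than the paper's treatment of the ambient topology.

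The genuine gap is in your last paragraph. You invoke definition~\ref{def:pseudolinear} (density of $O^\perp(x)$ in the leaf), but the lemma is stated only under definition~\ref{def:radially_pseudolinear}; pseudolinearity is a strictly stronger hypothesis that is not available here. Moreover, the conclusion you aim for, that $Q$ contains full leaves, is exactly the \emph{saturated} condition that the paper introduces separately after this lemma; it is not part of the definition of a fundamental neighborhood and there is no reason a priori that $\partial Q$ is a union of leaves. So even with pseudolinearity your target statement would be too strong. What you actually need is only that $Q$, as a compact subsurface of the ambient annulus, has exactly two boundary circles and that both are essential; your covering/folding heuristic gestures at this but, as you yourself flag, is not yet an argument. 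To close it without the extra density hypothesis you would need a different mechanism---for instance an Euler-characteristic count using that $f\colon Q\to f(Q)$ is a degree-$d$ cover with $Q$ and $f(Q)$ homeomorphic, forcing $\chi(Q)=0$---or, as the paper does, fall back on the compactness of $M$.
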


\begin{proof}
Note that the attracting component of boundary is isolated, it
means that the strictly invariant neighborhood is connected, so the
foliation does not have holes. Otherwise other components of boundary 
will be attracted to the attracting component, which will contradict 
isolatedness.

Since the boundary of strictly invariant neighborhood should also
belong to the foliation and the foliation is regular compact then all fibers
are homeomorphic to the circle. Due to the regularity of the foliation,
the neighborhood does not contain critical points or their preimages.
Hence, the map between a fundamental neighborhood and its image is 
a regular covering. But a M\"ebius string can only cover a M\"ebius
string, handles can only cover handles, and if there exists any,
there should be infinitely many of them, as fundamental neighborhood
has infinitely many images. It contradicts to compactness.
Since a fundamental neighborhood can't have a handle or a M\"ebius
string, therefore $Q$ is homeomorphic to the ring. 
\end{proof}

\begin{remark}\label{rem:single_fiber}
  In the theorem\ref{th:main_article4} we consider a fully invariant
  pseudolinear attracting isolated connected component of the
  boundary. Because it is 
  fully invariant, then equiscalar lines of the function $\Phi$ from
  the definition~\ref{def:radially_pseudolinear} in the fundamental
  neighborhood $Q$ from the lemma~\ref{lm:fund_nei_is_ring} consist of
  a single circle with the property that for its each point
  the circle also contains the whole set $O^\perp$ of that point.
\end{remark}

\begin{lemma}
  A fully invariant regular component $U$ of wandering set with
  a radially pseudolinear attracting isolated connected component
  is either homeomorphic to the ring (have the only other component of
  the boundary) in case it has no critical points 
  or it has infinite number of boundary components.
\end{lemma}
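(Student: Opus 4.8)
The plan is to treat the two alternatives separately, exploiting that full invariance ($f^{-1}(U)=U$) together with discreteness of $f$ makes $f\colon U\to U$ a proper branched covering of some degree $d$, ramified exactly at the critical points. By Lemma~\ref{lm:fund_nei_is_ring} and Remark~\ref{rem:single_fiber} the invariant collar $W=\{x\in U:\Phi(x)\le 1\}$ is a half-open ring whose level sets $\{\Phi=c\}$ are single circles. Since $\partial_0$ is attracting, Definition~\ref{def:radially_pseudolinear} gives $\Phi(f(x))=\lambda\Phi(x)$ with $\lambda<1$, so $f(W)\subset W$ and $U=\bigcup_{n\ge 0}f^{-n}(W)$: every forward orbit eventually enters $W$, hence every point of $U$ lies in some backward iterate of $W$. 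This exhaustion is the backbone of both cases.

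First I would settle the case with no critical points, where $f$ is an unramified covering and so is each iterate $f^{n}\colon f^{-n}(W)\to W$, of degree $d^{n}$. Near $\partial_0$ the map $f^{n}$ sends the single circle $\{\Phi=c\}$ onto the single circle $\{\Phi=\lambda^{n}c\}$, and by Remark~\ref{rem:single_fiber} the full preimage of the latter inside $V$ is again a single circle wrapping $d^{n}$ times; hence the monodromy of $f^{n}$ about the core class of $W$ is a single $d^{n}$-cycle. A connected $d^{n}$-fold covering of a ring is again a ring, so each $f^{-n}(W)$ is a ring with inner end $\partial_0$ and a single outer boundary circle. These rings are nested and exhaust $U$, whence $U$ is itself a ring and $\partial U$ consists of $\partial_0$ together with exactly one further circle. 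This is the first alternative.

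Next I would show that a critical point forces infinitely many boundary components. A degree-$1$ inner map is a homeomorphism and has no critical points, so the presence of a critical point gives $d\ge 2$. Assume for contradiction that $\partial U$ has only finitely many components; since $U$ is an open subsurface of the closed surface $M$ its genus is at most that of $M$, so $U$ is of finite type and $\chi(U)$ is finite. Riemann--Hurwitz applied to $f\colon U\to U$ gives $\chi(U)=d\,\chi(U)-B$ with $B=\sum_{p}(e_p-1)>0$ (the sum over the finitely many critical points of $U$, strictly positive by hypothesis), whence $\chi(U)=B/(d-1)>0$. A bordered surface with positive Euler characteristic is a disc, so $U$ would be a disc with $\partial U=\partial_0$. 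To exclude this, let $D_1\subset U$ be the compact central disc bounded by $\gamma_1=\{\Phi=1\}$ (the side not carrying the end $\partial_0$). As $f(\gamma_1)=\{\Phi=\lambda\}$ lies in $W$ and hence off $D_1$, openness of $f$ yields $f^{-1}(D_1)\subset\operatorname{int}D_1$, so $K=\bigcap_{n\ge 0}f^{-n}(D_1)$ is a nonempty compact fully invariant set contained in $\operatorname{int}D_1$, bounded away from $\partial_0$. But any nonempty compact invariant set carries nonwandering points, contradicting $U\subset\Omega^{c}$. Thus $\partial U$ cannot have finitely many components, and as the genus is finite this means $U$ has infinitely many boundary components.

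I expect the main obstacle to be the connectivity bookkeeping in the first case: one must guarantee that the backward iterates $f^{-n}(W)$ remain connected single rings rather than splitting into several annuli or acquiring extra outer circles, and this is exactly what the single-circle fibers of Remark~\ref{rem:single_fiber} — equivalently the $d^{n}$-cycle monodromy about $\partial_0$ — are invoked to secure. The delicate point in the second case is of a different nature and must be handled before Riemann--Hurwitz is applied: finiteness of the number of boundary components forces finite Euler characteristic only because the genus of $U\subset M$ is automatically finite, so this observation should be stated explicitly rather than taken for granted.
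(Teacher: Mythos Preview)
Your argument is correct. In the first alternative you reach the same conclusion as the paper by the same mechanism (each $f^{-n}$ of the collar is a connected cover of an annulus, hence an annulus), only phrased via monodromy rather than via fundamental neighborhoods; this is essentially a rewording.

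In the second alternative your route is genuinely different from the paper's. The paper argues constructively: using Sto\"{\i}low's local normal form it observes that at a critical point of local degree $n$ the preimage of a level circle becomes a bouquet of circles, so $f^{-1}(Q)$ already has $n+1$ boundary circles; iterating, $\bigcup_{k\le -j}f^{k}(Q)$ has at least $n^{j}+1$ boundary components, and the count diverges. No Euler-characteristic computation and no dynamical contradiction are used --- it is a direct combinatorial growth estimate. Your proof instead applies Riemann--Hurwitz to the proper branched self-cover $f\colon U\to U$ to force $\chi(U)=B/(d-1)>0$, pins $U$ down as a disc, and then extracts a nonempty compact forward-invariant set $K\subset U$, contradicting that $U$ consists of wandering points. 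This is shorter and more conceptual, and it explains \emph{why} infinitely many ends must appear (positive $\chi$ is incompatible with a self-branched-cover carrying no recurrence), whereas the paper's argument gives an explicit growth rate for the number of boundary components. The trade-off is that you invoke Riemann--Hurwitz for proper maps between open finite-type surfaces, which is a standard but somewhat heavier tool than anything the paper uses; the paper's counting needs only the local $z\mapsto z^{k}$ model and the ring structure of $Q$. Both the exhaustion $U=\bigcup_{n}f^{-n}(W)$ and the finiteness of the critical set (needed for $B<\infty$) are implicit in the paper's setup and are safe to use.
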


\begin{figure}[htbp]
\begin{center}
\includegraphics[scale=0.6]{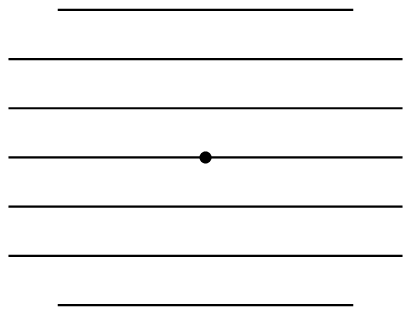}a)
\quad
\includegraphics[scale=0.6]{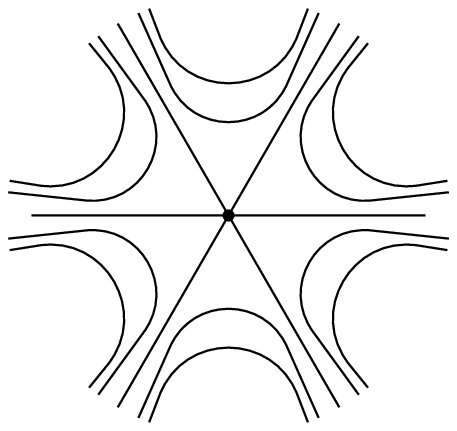}b)
\end{center}
\caption{a) Regular foliation b) foliation at the critical trajectory of
  degree 3.}\label{fig:foli-reg-sing}
\end{figure}

\begin{proof}
By lemma~\ref{lm:fund_nei_is_ring} all the fundamental
neighborhoods in the attracting neighborhood from the
definition~\ref{def:radially_pseudolinear} are homeomorphic to the ring.
In absence of critical points all the fundamental neighborhoods
are homeomorphic, hence the whole regular component is homeomorphic to
the ring.

In case when there are critical trajectories there are finite number of
them due to compactness. By definition~\ref{def:radially_pseudolinear},
the whole fully invariant neighborhood $U\cap V$ of the regular
component $U$ does not contain critical points.
Choose a fundamental neighborhood $Q\subset U\cap V$.  
Note that $Q$ is homeomorphic to the ring according to 
the lemma~\ref{lm:fund_nei_is_ring}. 

Consider the first preimage of $Q$ that contain a critical point.
It is enough to consider a single critical point case, the deduction 
in the case of multiple critical points is the same.

According to the Stoilov's ``lemma on simple curve''
\cite{Stoilov1938}, 
a critical point should distort the foliation causing the fibers to
intersect. In the image of critical point the foliation is regular, 
as on figure~\ref{fig:foli-reg-sing}a, while 
in the critical point $p$ and its preimages there is an
intersection of $n$ fibers, where $n$ is the degree of $p$.
This situation is shown on figure~\ref{fig:foli-reg-sing}b.

\begin{figure}[htbp]
  \centering
  \includegraphics[scale=0.5]{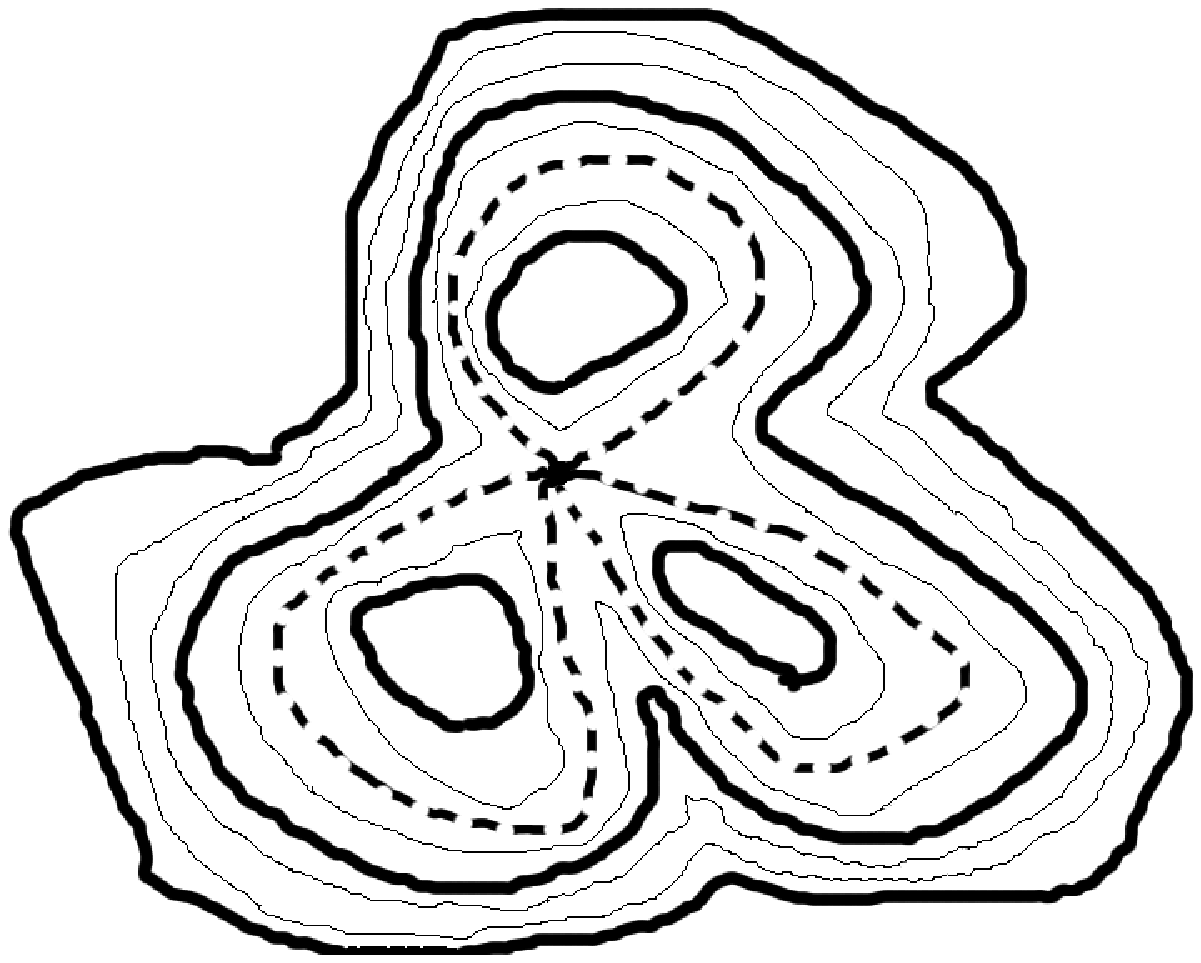}a)
  \includegraphics[scale=0.7]{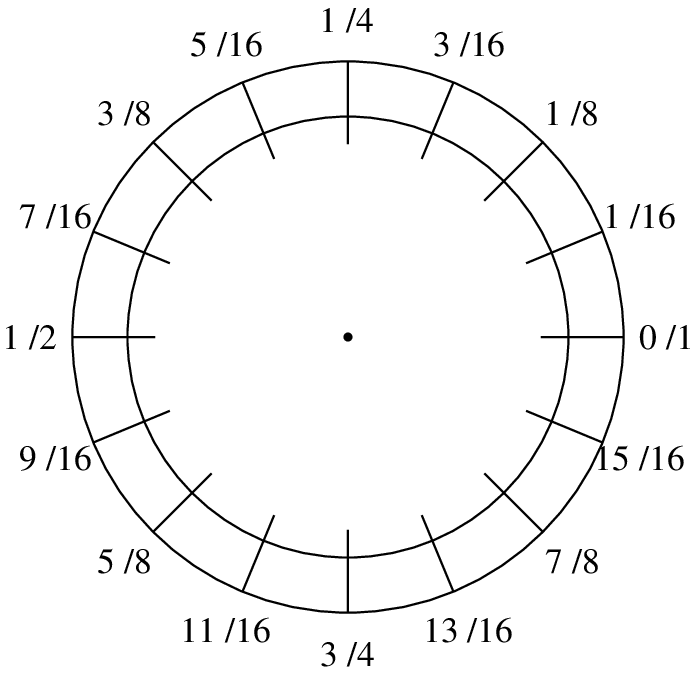}b)
  \caption{a) Wandering critical point.
    b) Numeration on $O^\perp(x)$.}
  \label{fig:fig3}
\end{figure}

Since fibers have dimension 1, they locally divide the surface $M$.
As shown in the proof of lemma~\ref{lm:fund_nei_is_ring}, the fibers
of the fundamental neighborhood $Q$ are circles.
Then, the critical preimage of a circle is a bouquet of circles.
Every circle of the bouquet divide the regular component $U$. 
Thus, preimage of $Q$ has $n+1$ components of boundary.
This situation is shown on the picture~\ref{fig:fig3}a).
The set $\cup_{k=-1}^{\infty} f^k(Q)$ also has $n+1$ components of
boundary because the rest of the fundamental neighborhoods $f^k(Q)$,
$k\ge 0$, are homeomorphic to the ring.
The further preimages of $Q$ are disjoint union of components of
connectedness each of them is either homeomorphic to $f^{-1}(Q)$ or cover it.
It means that $\cup_{k=-2}^{\infty} f^k(Q)$ will have not 
less than $n^2+1$ components of boundary, $\cup_{k=-3}^{\infty} f^k(Q)$
will have not less than $n^3+1$ components of boundary, and so on.
As a consequence, $U=\cup_{k=-\infty}^{\infty} f^k(Q)$ should have
infinitely many components of boundary.
If there are other critical points, the number of boundary
components just will grow faster, so the same reasoning is applicable
too. 
\end{proof}

\subsection{Coordinates on a fundamental neighborhood.}

Consider a saturated fundamental neighborhood $Q$ that is homeomorphic
to the ring as in lemma~\ref{lm:fund_nei_is_ring}.
Note that the boundary of $Q$ consists of foliation fibers.
Choose an orientation on $Q$.
As the points of $O^\perp(x)$ for a point $x\in Q$ belongs to the same
homeomorphic to a circle fiber (see remark~\ref{rem:single_fiber}), 
they are naturally cyclically ordered.
Assign a point $y\in f^{-n}\circ f^{n}(x)\subset O^\perp(x)$ a pair 
$\{k,m^{n}\}$, 
where $m$ is a degree of covering $Q$ by $f$, 
$n$ is the iteration number as in $f^{-n}\circ f^{n}$, 
the number $k$ is a cyclic order of $y$ among other points of the preimage
$(f^{-n}\circ f^{n})(x)$ according to the orientation of the fiber.
Note that $0 \le k < m^{n}$, $0$ is for $x$, $1$ is for the first another preimage of $x$
under $f^{-n}\circ f^{n}$ in the direction of orientation.
Let as write those pairs as fractions, for example, the pair
$\{k,m^{n}\}$ is written as fraction $\frac{k}{m^{n}}$.
Assign the point $x$ a pair $\frac{0}{1}$.
An example of the numeration for the degree $2$ is shown on the figure~\ref{fig:fig3}b).
Note that pairs that denote the same numeric fraction (like $\frac{1}{2}$ and
$\frac{2}{4}$) belong to the same point.

A fundamental neighborhood $Q$ already has one family of coordinate
lines due to the foliation. 
The other family can be built as follows. Choose a point
$x\in \partial Q$ such that $f(x)\in \partial Q$.
Then $x$ and $f(x)$ are on different connected components of $\partial Q$
which are fibers of the foliation. Connect $x$ and $f(x)$ with a
``transversal'' to the foliation jordan curve $\gamma$ that intersects
each fiber in a single point. Then the images $f^{-n}\circ
f^{n}(\gamma)$ will not intersect with each other because for every
$x\in\gamma$ the set $O^\perp(x)$ belongs to the fiber of $x$, but
$\gamma$ intersect fibers in a unique point.
Then the set $O^\perp(\gamma)=\cup_{n=0}^{\infty}f^{-n}\circ f^{n}(\gamma)$
will yield an everywhere dense family of curves
according to the definition~\ref{def:pseudolinear} of pseudolinearity.
Note that each image of the curve $\gamma$ can be assigned a pair
$\frac{k}{m^{n}}$ in the same way as for the images of a point.

This lamination $O^\perp(\gamma)$ can be extended to foliation by continuity.
For a point $x\in Q\setminus O^\perp(\gamma)$ define
$\mu(x)=\cap_{n\ge 0} \Delta_n(x)$, where $\Delta_n(x)$ is closure of
connected component of the set $Q\setminus f^{-n}\circ f^{n}(\gamma)$
that contains the point $x$. As an intersection of closed sets the set
$\mu(x)$ has non-empty intersection with every equiscalar line of
foliation of $Q$. 
Show that $\mu(x)$ intersect each fiber in a unique point.
Suppose on the contrary that there exists a fiber $\nu$ such that it is
intersected by $\mu(x)$ in 2 points $y_1$ and $y_2$. 
Denote by $z$ a point of intersection $\nu$ and $\gamma$.
Then by construction the whole segment $[y_1,y_2]$ of the fiber $\nu$
belongs to $\mu(x)$.  But it contradicts the fact that
$O^\perp(z)$ is everywhere dense in $\nu$ according to
definition~\ref{def:pseudolinear}.
Hence $\mu(x)$ intersects each equiscalar line in a unique point.
By construction, $\mu(x)$ continuously depends on equiscalar lines
foliation as $\mu(x)$ is majorated by images of $\gamma$.
As a consequence, it is also a jordan curve.

Thus, we constructed two families of curves on $Q$ such that their
fibers intersect each other in a single point. Note that the
foliation on equiscalar lines is a topological invariant of $f$,
while the second foliation is arbitrary up to choice of $\gamma$.

\subsection{Global Coordinates.}

Constructed above two families of curves on $Q$ generate
two families of curves on the whole regular component $U$
under the action of $f$ and $f^{-1}$. 
Let us call a foliation of $\Phi$ the neutral foliation 
and the foliation induced by $\gamma$ the timeline foliation.
By construction 
\begin{itemize}
\item they are invariant under the action of $f$;
\item they are regular except in critical points and their preimages.
\end{itemize}

\begin{remark}\label{rem:homeomorphism_on_timeline}
$f$ acts as homeomorphism on  non-critical fibers of the timeline
foliation. 
\end{remark}

\begin{proof}
  It follows from the fact that a fiber of timeline
  foliation intersects every fiber of neutral foliation, and, hence,
  intersects each set $O^\perp$ in no more then one point.
\end{proof}

Note that neutral coordinates on neutral fibers are defined on the
dense set $O^\perp(\gamma)$ and are
continuous by construction. 
They can be extended by continuity to a 
function $\nu\colon Q \to S^1= [0,1] \mod 1$.

By choosing a function $\tau'\colon\gamma\to[0,1]$ 
and extending it to constant on the neutral fibers
function $\tau'\colon Q\to[0,1]$ 
every point $p\in Q$ can be assigned unique coordinates
$(x=\nu(p),y=\tau(p))$, $x\in [0,1)$, $y\in [0,1]$. 

If there are no critical points those coordinates
can be extended to the coordinates on $U$
$(x,y)$, $x\in [0,1)$, $y\in\rr$ by the following rule. 
For a point $p\in U$ a neutral coordinate $x$ is determined by the
neutral coordinate of continuation of its timeline fiber, and
the timeline coordinate $y$ is determined by equation $y=y'+n$, 
where $y'=\tau(f^n(p))$ is timeline coordinate of $f^n(p)\in Q$.
Call them $\gamma$-coordinates.

\begin{lemma}\label{lm:pl_regcomp_no_critical_points_are_conjugate}
  Two regular components $U_1$ and $U_2$ of wandering sets of inner
  maps $f$ and $g$ of the same degree with a pseudolinear
  attracting isolated connected components of the boundary without critical points
  are topologically conjugate.
\end{lemma}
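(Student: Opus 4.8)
The plan is to exhibit a common topological model for both systems and conjugate each of $(U_1,f)$ and $(U_2,g)$ to it. Since neither component contains critical points, by the preceding lemma each $U_i$ is homeomorphic to a ring, and the global $\gamma$-coordinates constructed above identify it with the open cylinder $S^1\times\rr$, where $x=\nu\in[0,1)=\rr/\zz$ is the neutral coordinate and $y=\tau\in\rr$ the timeline coordinate. Denote the resulting coordinate homeomorphisms by $\Psi_f\colon U_1\to S^1\times\rr$ and $\Psi_g\colon U_2\to S^1\times\rr$, and let $m$ be the common degree. I would take as model the map
\[
F\colon S^1\times\rr\to S^1\times\rr,\qquad F(x,y)=(mx \bmod 1,\ y-1).
\]

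First I would verify that in $\gamma$-coordinates $f$ equals $F$, that is $\Psi_f\circ f=F\circ\Psi_f$. The timeline component is immediate from the construction of the $\gamma$-coordinates: if $f^n(p)\in Q$ then $f^{n-1}(f(p))\in Q$, so $\tau(f(p))=\tau(p)-1$, which gives the shift $y\mapsto y-1$. For the neutral component I would track the labels $\frac{k}{m^n}$ on the dense family $O^\perp(\gamma)$. The $m$ fibers carrying the labels $\frac{k}{m^n},\,\frac{k}{m^n}+\frac1m,\dots,\frac{k}{m^n}+\frac{m-1}{m}$ are precisely the $m$ preimages collapsed by the degree-$m$ covering that $f$ induces on each neutral circle, and each of them is mapped by $f$ onto the single fiber labelled $\frac{k}{m^{n-1}}\bmod 1$. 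Hence $\nu(f(p))=m\,\nu(p)\bmod 1$ on $O^\perp(\gamma)$, and by continuity of $\nu$ everywhere on $U_1$. Thus $\Psi_f$ conjugates $f$ to $F$, and identically $\Psi_g$ conjugates $g$ to $F$ with the same $m$, since by hypothesis the degrees coincide.

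Finally I would set $h=\Psi_g^{-1}\circ\Psi_f\colon U_1\to U_2$. As a composition of homeomorphisms $h$ is a homeomorphism, and using $\Psi_f\circ f=F\circ\Psi_f$ together with $\Psi_g^{-1}\circ F=g\circ\Psi_g^{-1}$ one computes
\[
h\circ f=\Psi_g^{-1}\circ F\circ\Psi_f=g\circ\Psi_g^{-1}\circ\Psi_f=g\circ h ,
\]
so $h$ is the desired topological conjugacy, proving the lemma.

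The main obstacle is not this algebra but the claim that the $\gamma$-coordinate map $\Psi_f$ is a genuine homeomorphism onto the whole cylinder and intertwines $f$ with $F$ globally. Concretely, one must check that the neutral coordinate $\nu$, defined only on the dense set $O^\perp(\gamma)$ and then extended by continuity, is fibrewise a homeomorphism of each neutral circle (no plateaus, no jumps) and depends continuously on the timeline coordinate, so that $(\nu,\tau)$ is jointly continuous with continuous inverse. Here pseudolinearity (Definition~\ref{def:pseudolinear}) is essential: it is the density of $O^\perp$ in each fiber that forces the sets $\mu(x)$ to meet every fiber in a single point and so rules out the degeneracies. One must also confirm that the extension of the coordinates over all of $U$ by the action of $f$ and $f^{-1}$ is well defined and bijective, which rests on $Q$ being a saturated fundamental neighborhood whose iterates tile $U$, and that Remark~\ref{rem:homeomorphism_on_timeline} keeps $f$ injective along timeline fibers so that the two transversal foliations persist under iteration.
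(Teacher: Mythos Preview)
Your proof is correct and is essentially the paper's approach repackaged through a common model. The paper builds the conjugacy directly by choosing a homeomorphism $h'_\gamma\colon\gamma_1\to\gamma_2$ and propagating it over the dense lamination $O^\perp(\gamma)$ via $(g^{-k}\circ g^{l})^{-1}\circ h'_\gamma\circ f^{-k}\circ f^{l}$, then extending by continuity; it closes by remarking that if $h'_\gamma$ is chosen to match the $\gamma$-coordinates the resulting conjugacy is $\Id$ in those coordinates --- which is precisely your $\Psi_g^{-1}\circ\Psi_f$.
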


\begin{proof}
  Choose arbitrarily saturated fundamental neighborhoods 
  $Q_1$ and $Q_2$ of $U_1$ and $U_2$ and transversal curves 
  $\gamma_1\subset Q_1$ and $\gamma_2\subset Q_2$ to neutral foliations 
  in $Q_1$ and $Q_2$. Construction above extends them 
  to the two pairs of foliations in $Q_1$ and $Q_2$.

  Choose a homeomorphism $h'_\gamma\colon \gamma_1\to \gamma_2$.
  Continue $h'_\gamma$ over the timeline foliation using maps
  $(g^{-k}\circ g^{l})^{-1} \circ h'_\gamma\circ f^{-k}\circ f^{l}$, $k,l\ge 0$. 
  Note that branches of those maps are homeomorphisms when restricted
  on a single fiber (see remark~\ref{rem:homeomorphism_on_timeline})
  so they have inverse homeomorphisms which are denoted here as
  $(g^{-k}\circ g^{l})^{-1}$. Denote the obtained map by $\hat h_\gamma$.
  Since $f$ and $g$ preserve their neutral foliations, $\hat h_\gamma$
  maps points that belong to the same fiber of the neutral foliation
  of $f$ to the points on the same fiber of the neutral foliation of $g$.
  Because $f$ and $g$ have the same map degree they act the same way
  on the points that have the same neutral coordinates.
  It means that $\hat h_\gamma$ induces not only mapping of timeline
  foliations, but also a mapping of neutral foliations. 

  It can be extended by continuity to a bijection $h_\gamma$ between $U_1$ and $U_2$.
  This bijection is continuous, since foliations are continuous.
  Also $h_\gamma$ is an open map because in every point it maps its base of topology
  built from foliation boxes to the corresponding base of topology
  generated by foliations in the image.
  It shows that $h_\gamma$ is a homeomorphism.
  By construction, $h_\gamma\circ f = g\circ h_\gamma$, it means that
  $f$ and $g$ are topologically conjugate.

  Note that if one choose $h'_\gamma$ to map identical timeline
  $\gamma$-coordinates  on $U_1$ and $U_2$ then $h_\gamma$ 
  in corresponding $\gamma$-coordinates on $U_1$ and $U_2$
  will be nothing but $\Id$.
\end{proof}

\section{Distinguishing graph.}\label{sec:distinguishing_graph}

As shown in lemma~\ref{lm:pl_regcomp_no_critical_points_are_conjugate}
in case when there are no critical points the topological
classification is simple. The only essential invariant is the map degree.

In case when there are critical points the symmetry of points breaks.
Topological conjugacy distinguish among critical and non-critical
points, their preimages and foliation fibers.
To deal with this extra topological invariants we need to build 
a distinguishing graph which is technically a compact encoding of some
labelled Konrod-Reeb graph of a specially chosen part of the foliation
function. 

Choose the fundamental neighborhood to be the neighborhood between the
boundary of the first and second images of the first critical level of
the invariant neutral foliation.
Note that due to the choice of the fundamental neighborhood 
and the genericity condition from
definition~\ref{def:genericity_barticle4} 
any fiber of the neutral foliation in the chosen fundamental neighborhood
has no more than one image of a critical point
and there are finitely many fibers that have an image of a critical point.
Choose the line $\gamma$ that generates the timeline foliation to
intersect each fiber with a critical point in that critical point,
including the boundary fibers of the fundamental neighborhood. 
In absence of critical points the points of regular component can 
be uniquely identified by a pair of coordinates on its timeline and
neutral fibers ($\gamma$-coordinates introduced above).
But the critical points make foliation singular so that lines of
timeline foliation intersect in critical points and their preimages.
Also preimages of the fundamental neighborhood divide onto components
such that internals of those components are mutually disconnected.
It follows that timeline $\gamma$-coordinates can be defined the same
way as in absence of critical points, but neutral $\gamma$-coordinates
can't. 

In that case introduce neutral $\gamma$-coordinates that are local to a
component of a preimage of the fundamental neighborhood $Q$.
Consider the fiber $\widehat\gamma$ of timeline foliation that contains the original 
curve $\gamma\in Q$. This fiber branches in critical points and their
preimages simultaneously together with dividing of preimages of 
the fundamental neighborhood $Q$ onto components so than there is
a unique branch of the singular fiber $\widehat\gamma$ that goes into
internals of each component. Now construct neutral coordinates for
every component using this branch of $\widehat\gamma$ as local origin 
$\frac{0}{1}$. At that points at the component boundary might get 
different neutral coordinates from components the point borders with.
To avoid the ambiguity let us always assume that the component
of the boundary further from attractor should belong to the component
and inherit its numeration. From that the critical points and their
preimages will get the neutral coordinate $\frac{0}{1}$.

Also, those coordinates are not unique: there are finitely many
components having the same coordinates.
To avoid this ambiguity component of a preimage should be counted for
every $n$ such that every point of $U$ could be identified 
with a triplet $(x,y,C)$, where $x$ is a local neutral coordinate,
$y$ is a global timeline coordinate, and $C$ is a component number.

To enumerate the components 
choose an orientation on the boundary of the original fundamental neighborhood $Q$.
Since the regular component is oriented, this orientation induces an
orientation on the preimages of the boundary.
Starting from a point on the original curve $\gamma$ and move on
the singular fiber $\widehat\gamma$ there is a unique shortest path to 
walk over all components of preimage of the border of $Q$ moving
on the preimages according to the orientation and moving according to
the orientation from one component to another on $\widehat\gamma$.
That path cyclically orders components. Number them according to that
order so the first component met on that path will get the number 1.
This numeration is related to the first critical point.

The other critical points further subdivide some of those components,
in such a way that those subcomponents share the same regular
component of the singular neutral foliation of the previous critical 
point. so in that case it is natural to create a compound number 
by assigning such a component the sequence of numbers each related to
the critical neutral fiber of corresponding critical point.
Call the obtained compound number $C=\{k_1,\dots,k_l\}$ a component number.

By construction critical points are identified by a triplet
$(\frac{0}{1},n+y',C)$, which does not depend on the choice of $\gamma$.
It only has compound component number depending on choice of orientation
and timeline coordinate $y$ having invariant integer part $n$ and a
fraction part $y'$ depending on the choice of the timeline
function $\tau$. However, the relative order of images and preimages
of critical neutral fibers does not depend on the choice of $\tau$.

Project the critical points on the semi-interval $[0,1)$ using their 
fractional parts $y'$ of their timeline coordinates and label 
them with pairs $(d,n,C \overline C)$, where $d$ is a local degree of
the critical point,  $C \overline C$ is an
unordered pair of component numbers with different choices of
orientation in $Q$.
Note that the first critical point gets by construction the label
$(...,0,\{1\}\{1\})$.

\begin{definition}
  Two labels are equal if their components are equal:
  the first and second components are equal as numbers, 
  the third components are equal as unordered pairs of vectors.
\end{definition}

\begin{definition}
  Call the semi-interval $[0,1)$ either without labelled points 
  or with labelled points such that $0$ is labelled and $0$'s label
  looks like $(...,0,\{1\}\{1\})$ a distinguishing graph.
\end{definition}

\begin{definition}
  Two distinguishing graphs are equivalent if there exists 
  a homeomorphism of the semi-interval $[0,1)$ such that the labels 
  of images and preimages are equal.
\end{definition}

\begin{proof}[Proof of the main theorem.]
The proof is similar to the proof of the
lemma~\ref{lm:pl_regcomp_no_critical_points_are_conjugate},
except we have not global but local coordinate families in 
each component of preimage of the fundamental neighborhood
that are built is section~\ref{sec:distinguishing_graph}.
By definition the equivalence of the distinguishing graphs guarantees 
one-to-one correspondence between adjacent components.
As in lemma ~\ref{lm:pl_regcomp_no_critical_points_are_conjugate},
it follows that a map written in that coordinates as $\Id$
is the desired homeomorphism.
\end{proof}

\end{document}